\title{Stillman's question for twisted commutative algebras}
\author{Karthik Ganapathy}
\address{Department of Mathematics, University of Michigan, Ann Arbor, MI}
\email{\href{mailto:karthg@umich.edu}{karthg@umich.edu}}
\thanks{The author was partially supported by a Donald J.~Lewis Fellowship at the University of Michigan.}
\urladdr{\url{http://www-personal.umich.edu/~karthg/}}
\keywords{polynomial ring, regular sequence, twisted commutative algebras}
\subjclass[2010]{13D02, 13A50}
\date{}
\begin{document}
\begin{abstract}
Let $\bA_{n, m}$ be the polynomial ring $\Sym(\bC^n \otimes \bC^m)$ with the natural action of $\GL_m(\bC)$.
We construct a family of $\GL_m(\bC)$-stable ideals $J_{n, m}$ in $\bA_{n, m}$, each equivariantly generated by one homogeneous polynomial of degree $2$. Using the Ananyan--Hochster principle, we show that the regularity of this family is unbounded. This negatively answers a question raised by Erman--Sam--Snowden on a generalization of Stillman's conjecture.
\end{abstract}
\maketitle
\section{Introduction}
{Let $K$ be a field. Stillman's conjecture asserts that the projective dimension of an ideal in $K[x_1, x_2, \ldots, x_n]$ generated by $r$ homogeneous polynomials of degree $\leq d$ can be bounded purely in terms of $r$ and $d$ and (crucially) independent of $n$, the number of variables. Caviglia showed that Stillman's conjecture is equivalent to the same statement with ``projective dimension" replaced by ``regularity" (see \cite{msbound}, Theorem 2.4).   
A positive answer to Stillman's conjecture first appeared in \cite{ahsmall} and two more proofs appeared in \cite{essbig}.}  The purpose of this short note is to answer the following question raised by Erman--Sam--Snowden generalizing Stillman's conjecture:
\begin{question}[see Question 5.6 in \cite{essgen}]\label{q:stillmantca}
Let $\bA_n$ be the twisted commutative algbera $\Sym(\bC^n \otimes \bC^{\infty})$. Is there a function $N(d, r)$ such that the regularity of $I$ is less than $N(d, r)$ for every $\GL$-ideal $I$ equivariantly generated by $r$ homogeneous polynomials of degree $\leq d$ in any $\bA_n$?
\end{question}

A twisted commutative algebra (tca) $\bA$ is a commutative $\bC$-algebra equipped with an action of $\GL_{\infty} = \bigcup_{n\geq 1} \GL_n(\bC)$ under which $\bA$ forms a \emph{polynomial} $\GL_{\infty}$ representation (see \S 8 of \cite{sstca} for details). 
A $\GL$-ideal of $\bA$ is an ideal of the $\bC$-algebra $\bA$ stable under the action of $\GL_{\infty}$. We say that a $\GL$-ideal $I$ is \emph{equivariantly generated} by elements $f_1, f_2, \ldots, f_r$ in $\bA$ if the smallest $\GL$-ideal containing $f_1, f_2, \ldots, f_r$ is $I$. The only tcas considered in this note are the free tca's generated in degree one, namely, $\bA_n = \Sym(\bC^n \otimes \bC^{\infty})$. 

Resolutions of nonzero proper $\GL$-ideals in $\bA_n$ are always infinite. Therefore, the ``projective dimension" version of Stillman's conjecture does not apply. However, these resolutions exhibit strong finiteness properties. For instance, the regularity of an ideal is finite, and each linear strand of the Betti table is ``finitely generated" in a suitable sense (see \S 7 in \cite{ssgl2}).  

One proof of Stillman's conjecture in \cite{essbig} crucially relies on the fact that the parameter space of ideals generated by $r$ generators of degree $\leq d$ in any $K[x_1, x_2, \ldots, x_n]$ is a $\GL$-noetherian topological space by \cite{dtop}. The space parametrizing $\GL$-ideals equivariantly generated by $r$ homogeneous polynomials of degree $\leq d$ in any $\bA_n$ is also a $\GL$-noetherian topological space by \cite{dtop} (see also \S 2 in \cite{essgen}).

Given these parallels between ideals in polynomial rings and $\GL$-ideals, it is reasonable to expect a positive answer to Question~\ref{q:stillmantca}. On the contrary, we show that the regularity cannot be bounded even when $d = 2$ and $r=1$, that is, for $\GL$-ideals equivariantly generated by one degree $2$ polynomial. We now explain the key idea behind our counterexample.

A tca (and its $\GL$-ideals) can be viewed as a polynomial functor on $\bC$-vector spaces (see \S8 of \cite{sstca}). If $I_n \subset \bA_n$ is a $\GL$-ideal, then treating them as polynomial functors, we set $\bA_{n,m} = \bA_n(\bC^m)$ and $I_{n, m} = I_n(\bC^m)$. $\bA_{n,m}$ is the polynomial ring in $n m$ variables with an action of $\GL_m(\bC)$ and $I_{n, m}$ is a $\GL_m(\bC)$-stable ideal of $\bA_{n, m}$. 
Furthermore, if one takes a minimal free resolution of $I_n$ as an $\bA_n$-module and evaluates it at $\bC^m$, one obtains a minimal free resolution of $I_{n, m}$ as an $\bA_{n, m}$-module. This implies that the regularity of $I_{n, m}$ is less than or equal to the regularity of $I_n$. 
In the next section, we explicitly construct a family of $\GL$-ideals $J_n \subset \bA_n$ each equivariantly generated by a homogeneous polynomial of degree $2$ and prove the following result about the family $J_n$:
\begin{theorem}\label{thm:mainresult}
There exists a function $g\colon \bN \rightarrow \bN$ such that $\limsup_{n \rightarrow \infty} g(n) = \infty$ and $J_{n,g(n)}$ is generated (as a usual ideal of $\bA_{n, g(n)}$) by $\frac{g(n)(g(n)+1)}{2}$ homogeneous polynomials of degree $2$ which form a regular sequence.
\end{theorem}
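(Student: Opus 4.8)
\textbf{Proof proposal.} The plan is to take $J_n$ to be the $\GL$‑ideal of $\bA_n$ equivariantly generated by a single nondegenerate quadratic form in the first block of variables: writing $x_{i,j}=e_i\otimes f_j$ for the standard bases $e_1,\dots,e_n$ of $\bC^n$ and $f_1,f_2,\dots$ of $\bC^{\infty}$, let $q=\sum_{i=1}^{n}x_{i,1}^{2}$, a homogeneous degree‑$2$ element of $\bA_n$ (it spans, under $\GL_{\infty}$, the polynomial functor $V\mapsto (\textstyle\sum_i e_i^2)\otimes\Sym^2(V)$). I will then show that $g(n)=\lfloor n/2\rfloor$ works. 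The first step is to make $J_{n,m}:=J_n(\bC^m)$ explicit. Since evaluation at $\bC^m$ takes the smallest $\GL$‑ideal containing $q$ to the smallest $\GL_m$‑stable ideal of $\bA_{n,m}=\bC[x_{i,j}:1\le i\le n,\ 1\le j\le m]$ containing $q$, and since a one‑line computation of $g\cdot q$ for $g\in\GL_m$ shows that the $\GL_m$‑subrepresentation generated by $q$ is spanned by the quadrics $Q_{k\ell}=\sum_{i=1}^{n}x_{i,k}x_{i,\ell}$ for $1\le k\le\ell\le m$ — equivalently, the entries of $X^{\top}X$, where $X=(x_{i,j})$ is the generic $n\times m$ matrix — we conclude that $J_{n,m}$ is generated, as an ordinary ideal, by these $\tfrac{m(m+1)}{2}$ quadrics (they are linearly independent because the $Q_{k\ell}$ involve pairwise disjoint sets of monomials). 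Thus the whole content of the theorem is that, when $m=\lfloor n/2\rfloor$, the $Q_{k\ell}$ form a regular sequence.

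Since $\bA_{n,m}$ is Cohen--Macaulay, $\tfrac{m(m+1)}{2}$ homogeneous elements form a regular sequence as soon as the ideal they generate has height $\tfrac{m(m+1)}{2}$; and this height is at most $\tfrac{m(m+1)}{2}$ in any case (Krull), so it suffices to prove it is at least $\tfrac{m(m+1)}{2}$, i.e.\ $\dim V(J_{n,m})\le nm-\tfrac{m(m+1)}{2}$. Geometrically $V(J_{n,m})=\{X\in\bC^{n\times m}:X^{\top}X=0\}$ is exactly the set of matrices whose columns span a totally isotropic subspace of $\bC^n$ for the standard symmetric bilinear form (it is also the null‑cone of $\mathrm{O}(n)$ acting on $m$ copies of the standard representation). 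To bound its dimension I would use the incidence variety
\[
\widetilde V=\bigl\{\,(X,U)\ :\ U\subseteq\bC^n \text{ a maximal totally isotropic subspace},\ \operatorname{colspan}(X)\subseteq U\,\bigr\},
\]
with its two projections. Projecting to $U$ exhibits $\widetilde V$ as the total space of a rank‑$mk$ vector bundle over the orthogonal Grassmannian $\mathrm{OG}(k,n)$ of maximal isotropic $k$‑planes, $k=\lfloor n/2\rfloor$, so $\dim\widetilde V=\dim\mathrm{OG}(k,n)+mk$; and since every totally isotropic subspace of $\bC^n$ is contained in one of dimension $k$ (elementary: pass to $W^{\perp}/W$), the projection $\widetilde V\to V(J_{n,m})$ is surjective, whence $\dim V(J_{n,m})\le\dim\mathrm{OG}(k,n)+mk$. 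Using the classical value $\dim\mathrm{OG}(k,n)=k(n-k)-\binom{k+1}{2}$ — obtainable from the easy recursion $\dim\mathrm{OG}(k,n)=(n-k-1)+\dim\mathrm{OG}(k-1,n-2)$, got by fixing an isotropic line inside an isotropic $k$‑plane — and substituting $m=k$,
\[
\dim V(J_{n,m})\ \le\ k(n-k)-\tbinom{k+1}{2}+k^2\ =\ nk-\tbinom{k+1}{2}\ =\ nm-\tfrac{m(m+1)}{2},
\]
which is exactly the required bound. Finally $g(n)=\lfloor n/2\rfloor\to\infty$, so $\limsup_{n\to\infty}g(n)=\infty$, and the proof is complete.

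The step I expect to be the main obstacle is this dimension bound for $V(J_{n,m})$: one has to recognize $\{X:X^{\top}X=0\}$ as fibered over an orthogonal Grassmannian and bring in the dimension formula for the latter. An equivalent route, avoiding quoting that formula, is to stratify $V(J_{n,m})$ by the rank $r$ of $X$: the rank‑$r$ stratum is fibered over $\mathrm{OG}(r,n)$ with affine fibers of dimension $rm$, hence has dimension $r(n+m)-\tfrac{r(3r+1)}{2}$, and this concave function of $r$ is maximized on $0\le r\le\min(m,\lfloor n/2\rfloor)$ at $r=m$ whenever $m\le\lfloor n/2\rfloor$, the maximum value then being $nm-\tfrac{m(m+1)}{2}$, which again pins down the codimension. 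Everything else — exactness of evaluation of polynomial functors, linear independence of the $Q_{k\ell}$, and the passage from ``height equals number of generators'' to ``regular sequence'' in a Cohen--Macaulay ring — is routine.
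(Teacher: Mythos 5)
Your proof is correct, but it takes a genuinely different route from the paper's. The paper does not compute $g(n)$ explicitly: it defines $g(n)$ to be the largest $m$ for which $\bF_{n,m}$ is a regular sequence and proves unboundedness by contradiction, invoking the Ananyan--Hochster principle (Theorem~1.6 of \cite{essbig}): if $g$ were bounded by $k$, the failure of the regular-sequence property for all $n$ would force the collective strength of $\bF_{n,k}$ to be uniformly bounded, which is then contradicted by a direct strength computation (restricting to one or two block-columns and using the easy rank computation of Remark~\ref{rmk:rank}). Your argument is the direct one: you identify $V(J_{n,m})$ with the variety of $n\times m$ matrices $X$ with $X^{\top}X=0$, bound its dimension via the orthogonal Grassmannian (either through the incidence variety or the rank stratification, both of which you have set up correctly, including the check that the stratum dimension $r(n+m)-\tfrac{r(3r+1)}{2}$ is maximized at $r=m$ when $m\le\lfloor n/2\rfloor$), and convert the codimension count into the regular-sequence statement via Cohen--Macaulayness. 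This gives the explicit and optimal value $g(n)=\lfloor n/2\rfloor$, which is strictly stronger than what the paper proves; in fact the paper itself notes (crediting S.~Sam and Lemma~4.2 of \cite{sswlit}) that exactly this is true, so you have in effect reconstructed that sharper result. The trade-off is generality: the paper's collective-strength argument transfers verbatim to generators $x_{1,1}^d+\dots+x_{n,1}^d$ of any degree $d>1$ (Remark~\ref{rmk:gendeg}), where no orthogonal Grassmannian description is available, while your geometric argument is tailored to the $d=2$ case.
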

From this, it is easy to see that the regularity of the family $J_n$ is unbounded. Indeed, since $J_{n,g(n)}$ is generated by a regular sequence of quadrics, its regularity is seen to be exactly $\frac{g(n)(g(n)+1)}{2}$ using the Koszul complex. This implies that the regularity of $J_n$ is at least $\frac{g(n)(g(n)+1)}{2}$, which is an unbounded function of $n$. Therefore, assuming Theorem~\ref{thm:mainresult}, we have proved:
\begin{corollary}
The answer to Question~\ref{q:stillmantca} is no.
\end{corollary}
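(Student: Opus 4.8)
The plan is to read off the Corollary from Theorem~\ref{thm:mainresult} together with one standard homological fact. Recall that, by construction, each $J_n$ is a $\GL$-ideal of $\bA_n$ that is equivariantly generated by a single homogeneous polynomial of degree $2$; so every $J_n$ falls under the scope of Question~\ref{q:stillmantca} with $d = 2$ and $r = 1$. If that question had a positive answer, there would in particular be a finite constant $N(2,1)$ with $\operatorname{reg}(J_n) < N(2,1)$ for all $n$. It therefore suffices to prove that $\limsup_{n\to\infty}\operatorname{reg}(J_n) = \infty$.

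First I would fix $n$ and write $N_n = \tfrac{g(n)(g(n)+1)}{2}$, where $g\colon\bN\to\bN$ is the function produced by Theorem~\ref{thm:mainresult}. By that theorem, the specialization $J_{n,g(n)}\subset\bA_{n,g(n)}$ is generated, as an ordinary ideal of the polynomial ring $\bA_{n,g(n)}$, by a regular sequence of $N_n$ quadrics; hence its minimal free resolution over $\bA_{n,g(n)}$ is the associated Koszul complex, and the standard bookkeeping of its bigraded Betti numbers (each Koszul step raises the homological degree by $1$ and the internal degree by $2$, so the largest value of $j-i$ among the nonzero $\beta_{i,j}$ is attained at the top of the complex) shows $\operatorname{reg}(J_{n,g(n)}) = N_n$ --- this is exactly the computation already recorded in the paragraph following Theorem~\ref{thm:mainresult}, and in any event $\operatorname{reg}(J_{n,g(n)})\ge N_n$, which is all that is needed below.

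Next I would invoke the comparison between a $\GL$-ideal and its evaluations noted in the introduction: evaluating a minimal free resolution of $J_n$ over $\bA_n$ at $\bC^{g(n)}$ yields a minimal free resolution of $J_{n,g(n)}$ over $\bA_{n,g(n)}$, and this operation can only delete, never introduce, bigraded Betti numbers (a free summand $\bA_n\otimes W$ with $W$ a polynomial $\GL_{\infty}$-representation specializes to $\bA_{n,g(n)}\otimes W(\bC^{g(n)})$, which may vanish), so $\operatorname{reg}(J_{n,g(n)}) \le \operatorname{reg}(J_n)$. Combining with the previous step gives $\operatorname{reg}(J_n) \ge N_n = \tfrac{g(n)(g(n)+1)}{2}$ for every $n$. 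Since $\limsup_{n\to\infty} g(n) = \infty$, the right-hand side has limit superior $\infty$; hence $\limsup_{n\to\infty}\operatorname{reg}(J_n)=\infty$, no finite $N(2,1)$ bounds these regularities, and a fortiori no function $N(d,r)$ of the kind demanded by Question~\ref{q:stillmantca} can exist. The answer to Question~\ref{q:stillmantca} is therefore no.

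Regarding difficulty: essentially all of the content is in Theorem~\ref{thm:mainresult}; the two ingredients used here --- the regularity of a complete intersection of quadrics via the Koszul complex, and the inequality $\operatorname{reg}(J_{n,m})\le\operatorname{reg}(J_n)$ obtained by evaluating minimal free resolutions --- are both routine, the latter having already been justified in the introduction. The only point to be slightly careful about is keeping the two senses of ``regularity'' consistent (that of the $\GL$-ideal $J_n$ as an $\bA_n$-module and that of the honest polynomial-ring ideal $J_{n,g(n)}$ in $\bA_{n,g(n)}$), which is precisely what the evaluation-of-resolutions step handles, so the displayed chain of inequalities is legitimate.
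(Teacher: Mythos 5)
Your proposal is correct and follows exactly the paper's own argument: it computes $\operatorname{reg}(J_{n,g(n)})$ from the Koszul complex on the regular sequence of quadrics, then uses the fact (recorded in the introduction) that evaluating a minimal free resolution of $J_n$ at $\bC^{g(n)}$ gives a minimal free resolution of $J_{n,g(n)}$, so that $\operatorname{reg}(J_n) \geq \frac{g(n)(g(n)+1)}{2}$ is unbounded. No substantive difference from the paper.
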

We now explain what we believe is the subtlety of {Theorem~\ref{thm:mainresult}. Fix an $n$ and let $I_n \subset \bA_{n}$ be a $\GL$-ideal generated in degree greater than $1$. The property that $I_{n, m}$ is generated by a regular sequence can only hold for small values of $m$: the (Krull) dimension of the ring $\bA_{n, m}$ is $nm$, which is linear in $m$, and the number of generators of the ideal $I_{n, m}$ grows to the order of (at least) $m^2$, but the length of a regular sequence can be at most the dimension of the ring.
However, Theorem~\ref{thm:mainresult} tells us that the small values of $m$ for which $J_{n, m}$ is generated by a regular sequence can be made as large as we want, provided we consider all $n$.}

After a preliminary version of this paper appeared, Steven Sam pointed out that the ideals $J_n$ were previously considered in Section~4 of \cite{sswlit} and in fact, one can take $g(n) = \lfloor n/2 \rfloor$ by Lemma~4.2 in loc. cit. (which is stronger than our main result). However, we include our original proof of Theorem~\ref{thm:mainresult} as our approach is considerably different, and is related to the methods used in \cite{essbig}. Our method also generalizes to all degrees $d > 1$ (see Remark~\ref{rmk:gendeg}).
\begin{remark}\label{rmk:ext}
Peeva asked an analogue of Stillman's question for exterior algebras. This was answered negatively in \cite{mcext}, where the counterexample is also a family of principal ideals generated in degree 2. {The regularity is shown to be large by proving that the generation degrees of the first syzygy is unbounded; in the family we construct, we prove that regularity is large by ``isolating" a non-zero diagonal of large length in the Betti table. It might be interesting to see whether the first syzygies of our counterexample are generated in unbounded degree as well.}
\end{remark}
\subsection*{Acknowledgements} {The author thanks Andrew Snowden for introducing him to the problem, and patiently explaining the foundations of representation stability. The author also thanks Anna Brosowsky for answering many questions about Macaulay2 and Singular.}

\section{Proof of Theorem~\ref{thm:mainresult}}
Let $\bA_n$ be the tca  $\Sym(\bC^n \otimes \bC^{\infty})$ which we identify as $\bC[x_{i, j} | 1 \leq i \leq n, j \in \bN]$ with the natural $\GL_{\infty}$ action. Under this identifcation, $\bA_{n, m} = \bA_n(\bC^m)$ is the polynomial ring $\bC[x_{i, j}|1 \leq i \leq n, 1\leq j \leq m]$. There is a grading on $\bA_n$ given by $\deg(x_i)= 1$ which descends to $\bA_{n, m}$ as well.

\begin{definition}
The \textbf{strength} of a homogeneous polynomial of $f \in \bA_{n, m}$ with $\deg(f) \geq 2$ is the minimal integer $k$ such that $f$ can be written as $\sum_{i=1}^{k+1} g_i h_i$ for homogeneous polynomials $g_i, h_i \in \bA_{n, m}$ with $\deg(g_i), \deg(h_i) > 0$ for all $i$. The \emph{collective strength} of a collection of homogeneous polynomials $f_1, f_2, \ldots, f_n$ of degree $\geq 2$ is the minimal strength of a non-trivial homogeneous $\bC$-linear combination.
\end{definition}
\begin{remark}\label{rmk:rank}
The strength of a quadratic polynomial $Q$ is $\lceil \frac{\rank(Q)}{2} \rceil - 1$, where $\rank(Q)$ is the rank of $Q$ considered as a quadratic form.
\end{remark}

For each $n$, let $J_n$ be the $\GL$-ideal of $\bA_n$ equivariantly generated by $x_{1, 1}^2 + x_{2, 1}^2 + \ldots + x_{n, 1}^2$. Let $J_{n,m} := J_n(\bC^m) \subset \bA_{n, m}$. 
It is easy to see that the ideal $J_{n,m}$ is minimally generated (as a usual ideal of $\bA_{n,m}$) by $f_{n, i,j} = x_{1,i}x_{1,j} + x_{2,i}x_{2,j}+ \ldots x_{n, i}x_{n,j}$ for $1 \leq i \leq j \leq m$. 
By the preceding remark, the strength of $f_{n, i, j}$ is {exactly $\lceil \frac{n}{2} \rceil - 1$ if $i = j$, or exactly $n-1 $ if $i \ne j$}
Let $\bF_{n,m}$ denote the sequence $f_{n,1, 1}, f_{n,1, 2}, \ldots f_{n,m, m}$ in $\bA_{n,m}$. The length of $\bF_{n, m}$ is {$\binom{m+1}{2}$}. It is clear that if $\bF_{n, m}$ is a regular sequence, then so is $\bF_{n, m-1}$.

\begin{proof}[Proof of Theorem~\ref{thm:mainresult}]
Let $g(n)$ to be the maximum $m$ such that $\bF_{n, m}$ is a regular sequence in $\bA_{n, m}$. Since the length of a regular sequence (consisting of homogeneous polynomials) in a graded ring is at most the dimension of the ring, we have $g(n) \leq 2n-1$, and in particular, $g(n)$ is finite. We will prove that $g(n)$ is unbounded. If not, then there exists a $k$ such that $\bF_{n,k}$ is not a regular sequence in $\bA_{n, k}$ for all $n$. So for each $n$, we have $\binom{k+1}{2}$ homogeneous polynomials of degree $2$ which do not form a regular sequence. By Theorem 1.6 in \cite{essbig}, there exists an $N$ such that the collective strength of $\bF_{n,k}$ is less than $N-1$ for all $n$. We will show that the collective strength of $\bF_{2N, k}$ is at least $N-1$.

Let $Q = \sum_{1 \leq i \leq j \leq k} \alpha_{i, j} f_{2N, i, j}$ be an arbitrary $\bC$-linear combination of the elements of $\bF_{2N, k}$ such that strength of $Q$ is less than $N-1$.
We can write $Q = \sum_{i=1}^{N-1} G_i H_i$, where $G_i$ and $H_i$ are linear forms. If $\alpha_{i, i} \ne 0$ for some $i$, by substituting $x_{s,t} = 0$ for all variables with $t \ne i$ we get, $\alpha_{i, i} f_{2N, i, i} = \widetilde{Q} = \sum_{i=1}^{N-1} \widetilde{G_i} \widetilde{H_i}$ where $\widetilde{Q}$ is the polynomial obtained by substituting $x_{s,t} = 0$ if $t \ne i$ in $Q$ (and similarly for $G_i$ and $H_i$). If all of the $\alpha_{i, i} = 0$, then choose $i, j$ such that $\alpha_{i, j} \ne 0$. As before, by substituting $x_{s,t} = 0$ if $t \ne i, j$, we get, $\alpha_{i, j} f_{2N, i, j} = \widetilde{Q} = \sum_{i=1}^{N-1} \widetilde{G_i} \widetilde{H_i}$, where $\widetilde{Q}$ is the polynomial obtained by substituting $x_{s,t} = 0$ if $t \ne i, j$ (and similarly for $G_i$ and $H_i$). Therefore, we see that the strength of either $f_{2N, i, i}$ or $f_{2N, i, j}$ is less than $N-1$, but the strength of both these polynomials is at least $N-1$, a contradiction.
\end{proof}
\begin{remark}\label{rmk:gendeg}
Our counterexample generalizes to all degrees $d > 1$. Let $J_n$ be the $\GL$-ideal equivariantly generated by $f_n = x_{1,1}^d + x_{2,1}^d + \ldots + x_{n, 1}^d$ in $\bA_n$. {The strength of the collection $\{f_n\}_{n \in \bN}$ is unbounded (this follows from Example 1.5(b) in \cite{essbig}).} Using this, one can suitably modify the above proof (and theorem statement) to obtain an unbounded function $g(n)$ as before. Therefore, the regularity of this collection of ideals, each equivariantly generated by one degree $d$ polynomial, is also unbounded.
\end{remark}
{Our proof suggests that instead of trying to bound the regularity, one could attempt to bound the generation degree of the $i$-th syzygy for a fixed $i$.
More precisely,
\begin{question}
Is there a function $N(i, d, r)$ such that the generation degree of the $i$-th syzygy of $I$ is less than $N(i, d, r)$ for every $\GL$-ideal $I$ equivariantly generated by $r$ homogeneous polynomials of degree $\leq d$ in any $\bA_n$?
\end{question}
As a first step, one could try to determine whether the first syzygies of our counterexample are generated in bounded degree or not (see Remark~\ref{rmk:ext}}).

\end{document}